\newtheorem{theorem}{Theorem}
\newtheorem{assumption}{Assumption}
\newtheorem{proposition}{Proposition}
\newtheorem{remark}{Remark}
\title{\LARGE \bf
Distributed Combined Space Partitioning and Network Flow Optimization: an Optimal Transport Approach
}
\author{Théo Laurentin$^{1,2}$, Patrick Coirault$^{2}$, Emmanuel Moulay$^{3}$, Antoine Lesage-Landry$^{1}$, and Jerome Le Ny$^{1}$  
\thanks{This work was supported by NSERC under grants ALLRP 586247-23 and RGPIN-5287-2018, 
and by the R\'egion Nouvelle-Aquitaine under grant AAPR-2023-2022-23896910.}
\thanks{$^{1}$Department of Electrical Engineering, Polytechnique Montreal and GERAD, Montreal, QC H3T\,1J4, Canada. Emails: {\tt\small \{theo.laurentin, antoine.lesage-landry, jerome.le-ny\}@polymtl.ca}}%
\thanks{$^{2}$LIAS (UR 20299), Universit\'e de Poitiers, 
2 rue Pierre Brousse, 86073 Poitiers Cedex 9, France. Email: {\tt\small patrick.coirault@univpoitiers.fr}}%
\thanks{$^{3}$XLIM (UMR CNRS 7252), Universit\'e de Poitiers, 
11 bd Marie et Pierre Curie, 86962 Futuroscope Chasseneuil Cedex, France. 
Email: {\tt\small emmanuel.moulay@univpoitiers.fr.}}%
}
\begin{document}
\maketitle

\begin{abstract}
This paper studies a combined space partitioning
and network flow optimization problem, with applications to
large-scale electric power, transportation, or communication systems.
In dense wireless networks for instance, one may want to simultaneously optimize the assignment of many spatially distributed
users to base stations and route the resulting communication
traffic through the backbone network. We formulate the overall
problem by coupling a semi-discrete optimal transport (SDOT)
problem, capturing the space partitioning component, with
a minimum-cost flow problem on a discrete network. This
formulation jointly optimizes the assignment of a continuous
demand distribution to certain endpoint network nodes and
the routing of flows over the network to serve the demand,
under capacity constraints. As for SDOT problems, we establish
that the formulation of our problem admits a tight relaxation
taking the form of an infinite-dimensional linear program,
derive its finite-dimensional dual, and prove that strong duality holds.
We leverage this theory to design a distributed dual 
(super)gradient ascent algorithm solving the problem,
where nodes in the graph perform computations based solely on
locally available information. Simulation results illustrate the
algorithm’s performance and its applicability to an electric
power distribution network reconfiguration problem. 

\end{abstract}

\section{Introduction}

Optimizing large-scale networks requires solving two fundamental problems: allocating end resources to serve the demand for a product or service, and efficiently routing flows through the network to ultimately meet that demand. For instance, in electric power systems, consumers  are assigned to a substation of the distribution system and the necessary power flows are routed through the transmission network to feed the substations. Similarly, in cellular communication networks, users must be matched to base stations and the resulting traffic must be routed through the backbone network. Although the assignment and routing problems are often treated separately for computational efficiency, considering both jointly can improve performance \cite{ReviewTSODSO,jointUA_CellularNetwork}.

This work addresses such a combined assignment and flow optimization problem, from an optimal transport (OT) point of view~\cite{peyré2019computationaloptimaltransport,Villani:2003book:topicsOT}. Specifically, we couple a semi-discrete optimal transport (SDOT) problem ~\cite[Chapter 5]{peyré2019computationaloptimaltransport} with a minimum-cost flow (MCF) problem~\cite{bertsekas1998network}, and develop a distributed algorithm to solve the combined problem. In the standard SDOT problem, one seeks an optimal transport map from an arbitrary source measure, representing for instance a demand distribution over a geographic area, to a discrete target measure, e.g., a set of endpoint nodes capable of serving this demand. This map in effect partitions the space supporting the demand distribution into different cells, such that the total demand in each cell equals the capacity of the associated endpoint node. Meanwhile, the MCF problem allows us to determine how to route flows efficiently through a network connecting the endpoints to supply nodes, subject to edge capacity constraints.

Problems related to the one formulated in this paper have been considered in several application domains. For example, in electrical distribution networks comprising substations and consumers connected via lines equipped with tie and sectionalizing switches, the radial reconfiguration problem~\cite{ReconfigurationSurvey}, further discussed in Section~\ref{section: electrical network ex}, aims to selectively open and close switches to form a radial topology, with substations acting both as root nodes of the distribution network and as interfaces to the transmission network where generation is available. This reconfiguration process results in a spatial partitioning that guarantees demand satisfaction while minimizing power losses. Conventional approaches to address it rely on solving computationally difficult mixed-integer programs or use faster heuristics that yield suboptimal solutions \cite{ReconfigurationSurvey}. Recent studies have explored clustering~\cite{ClusteringPowerGrids, ReconfBranchExchangeCluster} and partitioning techniques~\cite{PartitioningRadialReconf} to mitigate computational challenges, but retain a purely discrete network modelling approach. In contrast, the SDOT framework adopted here can provide good heuristic solutions for asymptotically large distribution networks by considering the limit of a continuous distribution of customers. 

In communication networks, related problems combining user association and resource allocation have been formulated as difficult combinatorial problems~\cite{jointUA_CellularNetwork,joint_routing}. Alternatively, SDOT has been used for space partitioning, 
e.g., for C-RAN device association~\cite{OT_5G} and 
communication with UAVs\cite{OT_UAVs}. However, these papers did not integrate network flow optimization.
Another related problem 
is branched optimal transport (BOT) \cite{branchedOT}, 
which uses subadditive costs in an OT formulation to encourage mass transport along paths,
effectively yielding branched networks. 
However, in BOT, the backbone network needs to be designed, whereas it is fixed and given here.
Moreover, exact methods are limited to small instances and heuristics are generally required \cite{branchedOT}.

To ensure scalability to large-scale networks, we propose a \emph{distributed} optimization method 
to solve our problem, where an agent placed at each node of the backbone network updates its 
local variables by communicating only with neighbouring agents. Distributed methods for various OT problems 
have been proposed in recent years. Decentralized alternating direction methods of multipliers (ADMM)-based methods can be used in discrete 
settings \cite{DEOT}, while \cite{DistributedOnlineOT} propose a distributed online optimization 
and control strategy 
for a continuous problem. 
For SDOT, dual deterministic and stochastic gradient ascent methods 
can be implemented in a distributed manner by computing assignment cell areas exactly or via sampling methods
\cite{SDOT_Distributed, lévyDistributedSDOT}. MCF problems have similarly been solved using distributed auction-based algorithms \cite{BertsekasDistributed} 
or 
distributed dual gradient ascent when cost functions are strictly convex \cite{bertsekas1998network}.
Under additional regularity conditions, \cite{DistributedNewtonMCF} develops a 
distributed Newton method. The approach proposed here relies on a dual gradient ascent to accommodate both the SDOT and MCF components.

The main contributions of this work are threefold. First, we propose a new framework 
combining SDOT and MCF, enabling joint space partitioning and flow optimization 
for the backbone network. Specifically, our model extends SDOT by allowing the target 
distribution to vary according to network constraints, a significant departure 
from standard formulations where this distribution is fixed. Second, leveraging optimal 
transport theory, we derive the dual problem and show that strong duality holds. 
Third, we develop a distributed dual ascent algorithm to solve the combined SDOT–MCF problem, 
suitable for large-scale networks. 

The rest of this paper is organized as follows. Section \ref{section:formulation} formulates  the problem. Section \ref{section:relaxation} introduces a Kantorovich-type relaxation \cite{Villani:2003book:topicsOT}  of the problem, derives its dual, shows that strong duality holds, and that the relaxation is tight.  Section \ref{section:algorithm} develops the distributed dual gradient ascent algorithm. Simulation results are presented in Section~\ref{section:simulations}, and concluding remarks are provided in Section~\ref{section:conclusion}.

\section{Problem formulation}
\label{section:formulation}

We now formally define our optimization problem.
We consider a space $X$ equipped with a 
non-negative measure~$\mu$, which models, for instance, a demand distribution 
for a certain product, e.g., electric power, by a population of consumers spatially distributed over $X$. 
In addition, we consider a network modelled by a directed graph $G = (\mathcal{N}, \mathcal{A})$, 
where $\mathcal{N} \subset \mathbb{N}$ is a finite set of nodes and $\mathcal{A} \subset \mathcal{N} \times \mathcal{N}$ 
is a set of arcs. Each node $i\in \mathcal{N}$ has an associated net \emph{supply} 
value $s_i \in \mathbb R$ (with $s_i < 0$ allowed, indicating a demand node).
A subset of the nodes, denoted by $\mathcal{S}\subset \mathcal{N}$, corresponds to \emph{endpoint} stations effectively serving the consumers, e.g., distribution substations in power systems. Serving a unit of demand at $x \in X$ by endpoint $i \in \mathcal S$ incurs a cost $c(x,i)$, where 
$c: X \times \mathcal N \to \mathbb R \cup \{+\infty\}$ is a given function, taking possibly infinite
values.
Moreover, the product can travel through the network, e.g., the electric transmission system, 
with the variables $p = (p_{ij})_{(i,j) \in \mathcal{A}} \in \mathbb{R}^{|\mathcal{A}|}$ representing
the \emph{flows} along the arcs,
where $|\cdot|$ denotes the cardinality of a set.
Sending a flow $p_{ij}$ along arc $(i,j) \in \mathcal{A}$ incurs a cost $c_{ij}(p_{ij})$, e.g., 
corresponding to active power losses, for given functions $c_{ij}: \mathbb R \to \mathbb R \cup \{+\infty \}$. Illustrative examples are shown on Figures \ref{fig:network_gray} and \ref{fig:partition_elec}
in Section \ref{section:simulations}.

The overall objective is to simultaneously determine how to assign each consumer of $X$ to an endpoint 
in $\mathcal{S}$ and route the product through the network, so as to satisfy demand and minimize
the combined costs of assignment and routing.
A deterministic assignment of consumers in $X$ to endpoints in $\mathcal S$ can be described by a map $T: X \to \mathcal{S}$.
The total demand assigned to endpoint $i\in \mathcal{S}$ is then given by $(T_{\#}\mu)_i=\mu(\{x\in X: T(x)=i\}) = \mu(T^{-1}(i))$, 
where $T_{\#} \mu$ denotes the pushforward measure of $\mu$ by $T$. The set~$T^{-1}(i) \subset X$ defines a \emph{cell} of points assigned to endpoint~$i$, and these cells for $i \in \mathcal S$
form a partition of $X$.
The combined spatial assignment and network flow optimization problem is then formulated as follows
\begin{subequations}
\label{MP}
\begin{align}  
\inf_{\substack{T: X \to \mathcal S \\ p \in \mathbb{R}^{|\mathcal{A}|}}} \; & \int_X c(x, T(x))\,\mathrm{d}\mu(x) + \sum_{(i,j)\in\mathcal{A}} c_{ij}(p_{ij}) \label{eq:obj} \\
\text{s.t.} \quad\,& \sum_{(i,j)\in\mathcal{A}} p_{ij} - \sum_{(j,i)\in\mathcal{A}} p_{ji} = s_i - (T_{\#}\mu)_i, \; \forall\, i\in\mathcal{S}, \label{eq:constraint_station} \\
& \sum_{(i,j)\in\mathcal{A}} p_{ij} - \sum_{(j,i)\in\mathcal{A}} p_{ji} = s_i, \;\;\forall\, i\in\mathcal{N}\setminus\mathcal{S}, \label{eq:constraint_demand} \\
& a_{ij} \leq p_{ij} \leq b_{ij}, \; \forall\, (i,j)\in\mathcal{A}, \label{eq:limits}
\end{align} 
\end{subequations}
where in~\eqref{eq:limits} the scalars $a_{ij}$ and $b_{ij}$, for $(i,j) \in \mathcal A$, 
are lower and upper limits on arc flows,
and~\eqref{eq:constraint_station} and~\eqref{eq:constraint_demand} represent the conservation of flow at each node.
In particular, we distinguish between the flow constraints \eqref{eq:constraint_station} at the endpoints~$i \in \mathcal S$, which must serve the demand $(T_{\#}\mu)_i$, and the flow constraints~\eqref{eq:constraint_demand} 
at the other nodes that only route traffic through the network.  

In this paper, we make the following assumptions.
\begin{assumption}  
\label{regularity_assumption}
We assume that:
\begin{enumerate}[(a)]
\item The assignment cost function $c: X \times \mathcal{S} \to \mathbb{R}\cup\{+\infty\}$ is lower semi-continuous and bounded from below.     \label{assump: assignment cost}
\item The domain $X$ is compact. 
\label{assump: compacity}
\item The arc cost functions $c_{ij}: \mathbb{R}\to \mathbb{R}\cup\{+\infty\}$ are convex and lower semi-continuous for all $(i,j) \in \mathcal{A}$.    \label{assump: cij convex}
\item Problem \eqref{MP} admits a feasible solution.
\label{assump: solutions exist}
\end{enumerate}
\end{assumption}
In Assumption~\ref{regularity_assumption}, \eqref{assump: assignment cost} and \eqref{assump: compacity}
are standard in the OT literature, with \eqref{assump: compacity} being satisfied in practical applications
and useful to simplify technical arguments \cite{Villani:2003book:topicsOT}. Property \eqref{assump: cij convex} is necessary to leverage duality results and also standard for MCF problems. Assumption~\ref{regularity_assumption}-\eqref{assump: solutions exist} is non-trivial
and implies in particular, by summing constraints \eqref{eq:constraint_station} and \eqref{eq:constraint_demand}, 
that we must have $\mu(X)=\sum_{i\in\mathcal{N}} s_i$, i.e., an equilibrium between demand and supply, 
in order for the flow conservation constraints to admit a feasible solution.

The objective of this work is to develop an algorithm to solve the optimization problem~\eqref{MP},
which moreover admits a distributed implementation by the nodes of the network, i.e., each node should 
execute operations that only require information exchanges with their neighbours in the graph $G$. 
In addition, the method of assigning consumers to endpoints should also scale to large-scale problems. 

\section{Characterization of an Optimal Solution}

\subsection{Kantorovich Relaxation and its Dual}
\label{section:relaxation}

From an OT point of view, \eqref{MP} is a type of Monge problem~\cite{Villani:2003book:topicsOT}, 
because the assignment of consumers to endpoints takes the form of a map.
For such problems, it is generally useful to introduce the corresponding Kantorovich relaxation,
by replacing the transport map $T$ with a transport plan $\pi \in \mathcal{M}_+(X\times\mathcal{S})$, 
i.e., a non-negative measure on $X \times \mathcal{S}$. Intuitively, this change allows for randomized assignments of consumers to endpoints.
The relaxed problem reads
\begin{subequations}
\label{KP}
\begin{align}
    \inf_{\substack{\pi \in \mathcal{M}_+(X\times\mathcal{S}) \\ p \in \mathbb{R}^{|\mathcal{A}|}}}  &
    \sum_{i\in\mathcal{S}}\int_X c(x,i)\, \mathrm{d}\pi(x,i) + \sum_{(i,j)\in\mathcal{A}} c_{ij}(p_{ij})     \label{eq:obj_relax} \\
    \text{s.t.} \qquad &\hspace{-0.5cm}\sum_{(i,j)\in\mathcal{A}} p_{ij} - \sum_{(j,i)\in\mathcal{A}} p_{ji} = s_i - 
    \pi(X,i), \; \forall\, i\in\mathcal{S},   \label{eq: Kantorovich assignment constraint} \\
    & \hspace{-0.5cm}\sum_{(i,j)\in\mathcal{A}} p_{ij} - \sum_{(j,i)\in\mathcal{A}} p_{ji} = s_i,\; \forall\, i\in\mathcal{N}\setminus\mathcal{S}, \nonumber \\
    & 
    \hspace{-0.25cm}\sum_{i\in\mathcal{S}} \pi(A,i) = \mu(A),\; \forall\, A\subset \mathcal B(X), \label{eq: marginal constraint} \\
    &\hspace{-0.25cm} a_{ij}\le p_{ij}\le b_{ij},\; \forall\, (i,j)\in\mathcal{A},\label{eq: bound constraint}
\end{align} 
\end{subequations}
where $\mathcal B(X)$ denotes the $\mu$-measurable subsets of $X$.
Constraint~\eqref{eq: marginal constraint} ensures that $\mu$ is the first marginal of $\pi$, 
and in~\eqref{eq: Kantorovich assignment constraint} the quantity $\pi(X,i)$ represents again the 
total demand assigned to $i$. Problem~\eqref{KP} is a relaxation of~\eqref{MP}, because~\eqref{MP} is obtained by restricting $\pi$ to measures induced by deterministic maps, i.e., 
of the form $\pi = (\text{id}_X,T)_{\#} \mu$, where $\text{id}_X$ is the identity map of $X$
and $T: X \to \mathcal S$.

The benefit of this relaxation is that~\eqref{KP} is now a linear program, 
although still infinite-dimensional in general, which satisfies useful duality properties~\cite{Villani:2003book:topicsOT}. In particular, we establish the following key duality result, whose proof is sketched in Appendix~\ref{appdx: duality proof}.

\begin{theorem} 
\label{thm: duality result}
Under Assumption~\ref{regularity_assumption}, the optimal value of \eqref{KP} is equal to 
\begin{equation} \label{eq: dual problem}
\sup_{\psi \in \mathbb R^{|\mathcal N|}} q(\psi),
\end{equation}
where the dual function $q: \mathbb R^{|\mathcal N|} \to \mathbb R$ is given by
\begin{align}
q(\psi)&=\int_X \min_{i\in\mathcal{S}}\{c(x,i)-\psi_i\}\,\mathrm{d}\mu(x)+\sum_{i\in\mathcal{N}} \psi_i\,s_i
\label{eq: dual function}
\\
&\quad+\sum_{(i,j)\in\mathcal{A}} \min_{p\in[a_{ij},b_{ij}]} \{ c_{ij}(p)-(\psi_i-\psi_j)p \}.  \nonumber
\end{align}
Moreover, the infimum in \eqref{KP} is attained at an optimal solution $(\pi^*, p^*) \in \mathcal M_+(X \times \mathcal S) \times \mathbb R^{|\mathcal A|}$.
\end{theorem}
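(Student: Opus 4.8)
The plan is to establish Theorem~\ref{thm: duality result} by casting \eqref{KP} as an infinite-dimensional linear program in standard form and invoking an appropriate strong-duality theorem, then simplifying the resulting dual. First I would introduce dual variables: a function $\varphi \in C(X)$ (or more precisely an $L^1(\mu)$ function) for the marginal constraint \eqref{eq: marginal constraint}, and a scalar multiplier $\psi_i \in \mathbb R$ for each flow-conservation constraint \eqref{eq: Kantorovich assignment constraint} and its analogue at nodes in $\mathcal N \setminus \mathcal S$. Forming the Lagrangian and minimizing over the primal variables $(\pi, p)$ separately — $\pi \ge 0$ forces the pointwise condition $\varphi(x) \le c(x,i) - \psi_i$ for all $i \in \mathcal S$, $\mu$-a.e., which at optimality gives $\varphi(x) = \min_{i \in \mathcal S}\{c(x,i) - \psi_i\}$; minimizing over each $p_{ij} \in [a_{ij}, b_{ij}]$ produces the term $\min_{p \in [a_{ij},b_{ij}]}\{c_{ij}(p) - (\psi_i - \psi_j)p\}$ — yields exactly the dual function $q(\psi)$ in \eqref{eq: dual function}, after collecting the $\sum_i \psi_i s_i$ terms. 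So the dual \eqref{eq: dual problem} is a finite-dimensional concave maximization (the $\min$'s being concave in $\psi$, and the first integrand being concave as an infimum of affine functions, measurable and integrable by Assumption~\ref{regularity_assumption}-\eqref{assump: assignment cost}--\eqref{assump: compacity}).

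Next I would prove weak duality directly — for any feasible $(\pi,p)$ and any $\psi$, the chain of inequalities $\text{(primal objective)} \ge q(\psi)$ follows by substituting the constraints and using $c(x,i) - \psi_i \ge \min_k\{c(x,k)-\psi_k\}$ and $c_{ij}(p_{ij}) - (\psi_i-\psi_j)p_{ij} \ge \min_{p}\{\cdots\}$ — which requires no regularity beyond measurability. The substantive part is strong duality (no gap) together with primal attainment. For this I would exploit the \emph{decoupled} structure: \eqref{KP} is really an SDOT problem in $\pi$ coupled to an MCF problem in $p$ only through the quantities $(\pi(X,i))_{i \in \mathcal S}$. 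One clean route is to eliminate $p$: define $V(d) = \inf\{\sum_{(i,j)} c_{ij}(p_{ij}) : p \text{ satisfies the flow constraints with } (T_\#\mu)_i \text{ replaced by } d_i, \text{ and } a_{ij}\le p_{ij}\le b_{ij}\}$ for $d \in \mathbb R^{|\mathcal S|}$; by Assumption~\ref{regularity_assumption}-\eqref{assump: cij convex} this is a convex, lower semi-continuous function of $d$ (value function of a parametric convex program with linear constraints; lower semi-continuity and properness using the feasibility Assumption~\ref{regularity_assumption}-\eqref{assump: solutions exist}). Then \eqref{KP} becomes $\inf_\pi \{\int c\,d\pi + V((\pi(X,i))_i) : \pi \ge 0, \ \sum_i \pi(\cdot,i) = \mu\}$, a generalized SDOT problem with a convex penalty on the target marginal instead of a hard constraint.

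For this reformulated problem I would apply the standard Kantorovich duality machinery on the compact space $X$ (Assumption~\ref{regularity_assumption}-\eqref{assump: compacity}): the feasible set $\{\pi \ge 0 : \sum_i \pi(\cdot,i) = \mu\}$ is weak-$*$ compact (total mass fixed to $\mu(X)$, $X \times \mathcal S$ compact), the objective $\pi \mapsto \int c\,d\pi + V((\pi(X,i))_i)$ is weak-$*$ lower semi-continuous (lower semi-continuity of $c$ and of $V$, composed with the continuous maps $\pi \mapsto \pi(X,i)$), hence the infimum is attained — this gives primal attainment and the existence of $(\pi^*,p^*)$. For zero duality gap I would invoke Fenchel–Rockafellar duality (or Villani's version of Kantorovich duality adapted to a relaxed marginal constraint), checking the qualification condition that the primal value is finite (Assumption~\ref{regularity_assumption}-\eqref{assump: solutions exist} plus boundedness below of $c$) and that the perturbation function is suitably continuous; the conjugate computation reproduces $q(\psi)$ with $\psi$ ranging over $\mathbb R^{|\mathcal N|}$ (the components on $\mathcal S$ dual to the soft marginal via $V^*$, the components on $\mathcal N \setminus \mathcal S$ dual to the hard flow constraints).

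The main obstacle I anticipate is the interface between the infinite-dimensional part (the measure $\pi$ / function $\varphi$) and making the qualification condition rigorous: infinite-dimensional LPs can have a duality gap, so one must either verify a Slater-type or closedness condition carefully, or — as is cleanest here — reduce to the finite-dimensional MCF dual and the classical SDOT dual separately and then glue the two dual problems along the shared multipliers $(\psi_i)_{i \in \mathcal S}$. Handling the possibly infinite values of $c$ and of $c_{ij}$ (so that $V$ may be extended-real-valued and the "$\min$" in $q$ may be $+\infty$ or $-\infty$ on part of $\mathbb R^{|\mathcal N|}$) requires care in the conjugacy arguments, but does not change the structure. Since the statement says the proof is only \emph{sketched} in the appendix, I would present the Lagrangian derivation of $q$, state weak duality, and then cite the relevant duality theorem (e.g., \cite{Villani:2003book:topicsOT} for the SDOT part, convex LP duality for the MCF part) to close the gap, with the value-function reduction $V$ as the conceptual glue.
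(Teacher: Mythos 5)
Your proposal is sound and reaches the right dual, but it follows a genuinely different route from the paper. The paper applies the Fenchel--Rockafellar theorem \emph{once} to the joint problem, posed on $E=\mathcal{C}_b(X\times\mathcal{S})\times\mathbb{R}^{|\mathcal{A}|}$: it takes $\Theta$ to be the sum of the indicator of $\{u\ge-c\}$ and the conjugates $\tilde c_{ij}^{*}(-v_{ij})$, and a single function $\Xi$ that is finite only on pairs of the coupled form $u(x,i)=\varphi(x)+\psi_i$, $v_{ij}=\psi_j-\psi_i$; it is this $\Xi$ that ties the SDOT multipliers to the MCF multipliers, and the conjugate computation then recovers the marginal and flow-balance constraints of \eqref{KP} simultaneously. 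Primal attainment falls out for free because \eqref{KP} sits on the $\max_{z^*\in E^*}$ side of Theorem~\ref{thm: Fenchel-Rockafellar}. You instead decouple the problem through the MCF value function $V(d)$, obtain attainment separately from weak-$*$ compactness of $\{\pi\ge0:\ \pi(\cdot\times\mathcal{S})=\mu\}$ together with weak-$*$ lower semicontinuity of $\pi\mapsto\int c\,\mathrm d\pi+V\bigl((\pi(X,i))_i\bigr)$, and then glue the SDOT dual to the finite-dimensional MCF dual along the shared multipliers $(\psi_i)_{i\in\mathcal{S}}$. Both are legitimate. Your route confines the coupling to a finite-dimensional conjugacy step and isolates the infinite-dimensional difficulty in a soft-marginal variant of SDOT duality; what it costs you is (i) verifying that $V$ is proper, convex, and lower semicontinuous (easy here since the flows live in a compact box, so the infimal projection is exact), (ii) a strong-duality argument for the parametric MCF problem so that $-V^*(-\psi_{\mathcal S})$ indeed equals the partial maximization of the MCF dual over $(\psi_i)_{i\in\mathcal N\setminus\mathcal S}$, and (iii) a qualification condition for the finite-dimensional inf--sup exchange (or infimal convolution) when you combine $V^*$ with the SDOT dual — the last two being the points you correctly flag but would need to make precise, since the textbook Kantorovich duality assumes both marginals fixed. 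The paper's route avoids the value function entirely but must instead check the continuity/qualification hypothesis of Fenchel--Rockafellar for the combined $\Theta$ directly on $E$.
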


Note that \eqref{eq: dual problem} is a finite-dimensional optimization problem,
in contrast to the primal problem \eqref{KP}, a feature that we exploit to design 
our algorithm in Section \ref{section:algorithm}.

\subsection{Reconstructing the Primal Optimal Solution}
\label{section: primal Monge}

To recover an optimal solution for the original problem~\eqref{MP}, we introduce
a few additional assumptions. The first is commonly assumed to guarantee the tightness of the Kantorovich relaxation 
for SDOT problems \cite{SDOT_Distributed}.
\begin{assumption}  \label{HypSDOT}
For every pair of distinct nodes $i,j\in\mathcal{S}$ and every real number $r\in\mathbb{R}$, 
the set $\{x\in X: c(x,i)-c(x,j)=r\}$ has $\mu$-measure zero.
\end{assumption}

Define, for each $i \in \mathcal S$ and $\psi \in \mathbb R^{|\mathcal N|}$, the generalized 
Laguerre cell \cite{Aurenhammer:SIAM87:powerdiagrams, peyré2019computationaloptimaltransport}
\begin{equation*}    
\mathrm{Lag}_i(\psi) = \left \{ x\in X: c(x,i)-\psi_i\le c(x,j)-\psi_j, \forall\, j\in\mathcal{S} \right\}.
\end{equation*}
These cells are used below to define the optimal assignment map. 
Assumption~\ref{HypSDOT} ensures that the intersection of two distinct Laguerre cells, 
where assignment randomization could be beneficial, has $\mu$-measure $0$ and hence
does not contribute to the overall cost.
Note that the Laguerre cells are polytopes when $X = \mathbb R^d$ and 
$x \mapsto c(x, i)$ is the squared Euclidean distance between $x$ and the position of
node $i \in \mathcal S$ \cite{Aurenhammer:SIAM87:powerdiagrams}.
Next, we introduce the following technical assumption.

\begin{assumption}  \label{assump: dual maximizer exists}
A dual optimal solution $\psi^* \in \mathbb R^{|\mathcal N|}$ 
maximizing \eqref{eq: dual problem} exists.
\end{assumption}
Explicit conditions under which Assumption~\ref{assump: dual maximizer exists} 
is satisfied will be developed in future work.
It is empirically satisfied in our numerical simulations in Section \ref{section:simulations},
and it is known to be satisfied for the standard SDOT problem under Assumption~\ref{regularity_assumption}-\eqref{assump: compacity}, see \cite[Lemma 9]{lemma9}.
Finally, the next assumption, which strengthens Assumption \ref{regularity_assumption}, is often made to simplify the analysis of the MCF component of the problem.  

\begin{assumption}  \label{HypMCF} 
For each arc $(i,j)\in\mathcal{A}$, the cost function $c_{ij}:\mathbb{R}\to\mathbb{R}$ is strictly convex.
\end{assumption}

Assumption \ref{HypMCF} ensures that for all $(i,j) \in \mathcal A$, the minimizer
\begin{equation}    \label{eq: arc price}
p_{ij}(\psi) \coloneqq \arg\min_{p\in[a_{ij},b_{ij}]} \{ c_{ij}(p)-(\psi_i-\psi_j)p \},
\end{equation}
exists and is unique.
The following proposition provides a means to compute an optimal solution for the original problem~\eqref{MP}, assuming an optimal dual solution is known. It also shows that the relaxation~\eqref{KP} is tight.

\begin{proposition}
\label{thm:primal_reconstruction}
Under Assumptions \ref{regularity_assumption}, \ref{HypSDOT}, 
\ref{assump: dual maximizer exists}, and \ref{HypMCF}, 
let $\psi^*$ be a maximizer of \eqref{eq: dual problem}, 
let $p^*_{ij} \coloneqq p_{ij}(\psi^*)$ for each arc $(i,j)\in\mathcal{A}$, 
computed from \eqref{eq: arc price}, and let $p^* \in \mathbb R^{|\mathcal A|}$ 
be the vector with components $p^*_{ij}$. 
Define the assignment map $T^*(x) := \arg\min_{i\in\mathcal{S}} \{ c(x,i)-\psi^*_i \}$, 
for every $x\in X$, with ties between endpoints in the minimization broken arbitrarily,
if any.  
Then, the pair $(T^*,p^*)$ is an optimal solution for~\eqref{MP}.
\end{proposition}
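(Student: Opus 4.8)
My plan is to show that the pair $(T^*, p^*)$ is (i) feasible for~\eqref{MP} and (ii) achieves the optimal value of~\eqref{KP}, which since~\eqref{KP} is a relaxation of~\eqref{MP} is enough to conclude optimality for~\eqref{MP}. The natural route is via complementary slackness / strong duality: by Theorem~\ref{thm: duality result}, the optimal value of~\eqref{KP} equals $q(\psi^*)$, so I want to exhibit a primal point whose objective equals $q(\psi^*)$.

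First I would treat the assignment part. Define $\pi^* := (\mathrm{id}_X, T^*)_\#\mu$, i.e.\ the deterministic plan induced by $T^*$. By construction of $T^*$ via the Laguerre cells, for $\mu$-a.e.\ $x$ we have $T^*(x) \in \arg\min_{i\in\mathcal S}\{c(x,i)-\psi^*_i\}$, and Assumption~\ref{HypSDOT} guarantees that the tie-breaking on the overlaps $\mathrm{Lag}_i(\psi^*)\cap\mathrm{Lag}_j(\psi^*)$ is irrelevant since these have $\mu$-measure zero; in particular $T^*$ is $\mu$-measurable (each cell is a sublevel set of l.s.c.\ functions) and $\pi^*$ is well defined with first marginal $\mu$, so~\eqref{eq: marginal constraint} holds. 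Then
\[
\sum_{i\in\mathcal S}\int_X c(x,i)\,\mathrm d\pi^*(x,i) = \int_X c(x,T^*(x))\,\mathrm d\mu(x) = \int_X \min_{i\in\mathcal S}\{c(x,i)-\psi^*_i\}\,\mathrm d\mu(x) + \sum_{i\in\mathcal S}\psi^*_i\,\pi^*(X,i),
\]
using that on the cell assigned to $i$ we have $c(x,T^*(x)) = \min_j\{c(x,j)-\psi^*_j\} + \psi^*_i$. Next, for the flow part, Assumption~\ref{HypMCF} makes $p^*_{ij} = p_{ij}(\psi^*)$ the unique minimizer in~\eqref{eq: arc price}, so
\[
c_{ij}(p^*_{ij}) = \min_{p\in[a_{ij},b_{ij}]}\{c_{ij}(p)-(\psi^*_i-\psi^*_j)p\} + (\psi^*_i-\psi^*_j)p^*_{ij}.
\]

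The key remaining step is feasibility of $p^*$ for the flow-conservation constraints~\eqref{eq: Kantorovich assignment constraint}--\eqref{eq:constraint_demand} with the masses $(T^*_\#\mu)_i = \pi^*(X,i)$, equivalently the node balance $\sum_{(i,j)}p^*_{ij} - \sum_{(j,i)}p^*_{ji} = s_i - \pi^*(X,i)\mathbf 1_{\{i\in\mathcal S\}}$. This is the main obstacle, and I expect it to follow from first-order optimality of $\psi^*$ for the concave dual~\eqref{eq: dual problem}: computing a supergradient of $q$ at $\psi^*$, the $\psi_i$-component is $s_i - \mu(\mathrm{Lag}_i(\psi^*)) - \sum_{(i,j)}p^*_{ij} + \sum_{(j,i)}p^*_{ji}$ for $i\in\mathcal S$ (using Danskin-type differentiation of the $\min$ terms, where Assumption~\ref{HypSDOT} kills the non-smooth directions of the $\mu$-integral term and Assumption~\ref{HypMCF} makes the arc terms differentiable with derivative $-p^*_{ij}$), and similarly $s_i - \sum_{(i,j)}p^*_{ij} + \sum_{(j,i)}p^*_{ji}$ for $i\notin\mathcal S$; since $\psi^*$ is an unconstrained maximizer this supergradient is zero, which is exactly the flow-conservation constraint. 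The bound constraints~\eqref{eq: bound constraint} hold by definition of $p_{ij}(\psi^*)$ as a minimizer over $[a_{ij},b_{ij}]$. Hence $(\pi^*, p^*)$ is feasible for~\eqref{KP}; adding the three displayed identities and using $\sum_{i\in\mathcal S}\psi^*_i\pi^*(X,i) + \sum_i\psi^*_i s_i \mathbf 1_{\{i\notin\mathcal S\}}$-type bookkeeping together with the just-established flow balance (which lets the $\psi^*_i \pi^*(X,i)$ and $(\psi^*_i-\psi^*_j)p^*_{ij}$ terms telescope against $\sum_i \psi^*_i s_i$) shows the objective of $(\pi^*,p^*)$ equals $q(\psi^*)$.

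Finally I would close the argument: by Theorem~\ref{thm: duality result} the value of~\eqref{KP} is $q(\psi^*)$, so $(\pi^*,p^*)$ is optimal for~\eqref{KP}; since $\pi^* = (\mathrm{id}_X,T^*)_\#\mu$ is induced by the map $T^*$, the pair $(T^*,p^*)$ is feasible for the Monge problem~\eqref{MP} with the same objective value, and as $\mathrm{val}\eqref{MP}\ge\mathrm{val}\eqref{KP}$ always, $(T^*,p^*)$ attains the infimum in~\eqref{MP}. This simultaneously proves the relaxation~\eqref{KP} is tight. The one technical point to be careful about is justifying the supergradient computation of $q$ — i.e.\ interchanging $\nabla_\psi$ with the integral over $X$ in the first term of~\eqref{eq: dual function} — which is where Assumptions~\ref{regularity_assumption}-\eqref{assump: compacity} (compactness, boundedness of $c$ from below) and~\ref{HypSDOT} do the work, via a dominated-convergence argument on difference quotients combined with the measure-zero-overlap property.
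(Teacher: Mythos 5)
Your proposal is correct, but it runs the argument in the opposite direction from the paper. The paper starts from an optimal primal pair $(\pi^*,p^*)$ of the relaxation~\eqref{KP} --- whose existence is part of Theorem~\ref{thm: duality result} --- multiplies the flow constraints by $\psi_i^*$, and uses strong duality to obtain a sum of nonnegative complementary-slackness terms $\sum_{(i,j)}B_{ij}+\int A\,\mathrm d\pi^*$ equal to zero; each term must then vanish, which forces $p^*_{ij}=p_{ij}(\psi^*)$ (via Assumption~\ref{HypMCF}) and forces $\pi^*$ to be concentrated on the Laguerre-cell assignment (with Assumption~\ref{HypSDOT} disposing of ties). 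Feasibility of the reconstructed pair is thus inherited from the already-feasible optimizer, with no gradient computation needed. You instead build $(\pi^*,p^*)$ directly from $\psi^*$ and prove its feasibility from the first-order condition $0\in\partial q(\psi^*)$, then check the zero duality gap by telescoping $\sum_{(i,j)}(\psi^*_i-\psi^*_j)p^*_{ij}$ against $\sum_i\psi^*_i s_i$. This is a legitimate alternative: it does not rely on attainment of the primal infimum in~\eqref{KP}, and it makes the construction fully explicit. Its price is the step you yourself flag: concluding that the \emph{specific} supergradient formula of Proposition~\ref{prop: gradient} vanishes at $\psi^*$ (rather than merely that \emph{some} supergradient does) requires the superdifferential of $q$ to be a singleton there; you should state explicitly that Assumption~\ref{HypSDOT} (measure-zero tie sets, so all measurable selections of the pointwise superdifferentials integrate to the same vector) and Assumption~\ref{HypMCF} (unique arc minimizers, Danskin) together make $q$ differentiable, so that $\nabla q(\psi^*)=g(\psi^*)=0$. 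With that made precise, your bookkeeping for the duality gap and the final passage from~\eqref{KP} to~\eqref{MP} (feasibility of $(T^*,p^*)$ and $\mathrm{val}\,\eqref{MP}\ge\mathrm{val}\,\eqref{KP}$) are exactly right and also yield tightness of the relaxation, as in the paper.
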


Note that the map $T^*$ in Proposition~\ref{thm:primal_reconstruction} specifies
that all points in $\mathrm{Lag}_i(\psi^*)$ should be assigned to endpoint 
$i \in \mathcal S$. The proof of this proposition is sketched in Appendix \ref{appdx: recovering a solution proof}.

\section{Distributed Algorithm}
\label{section:algorithm}
In this section we propose a distributed algorithm to solve problem \eqref{MP}.
Based on the previous analysis, one can compute an optimal dual solution for \eqref{eq: dual problem} 
and then recover an optimal primal solution using Proposition \ref{thm:primal_reconstruction}.

\subsection{Concavity and Supergradient of the Dual Function}

In this section we show formally that the dual function \eqref{eq: dual function}
is concave, which is expected from duality theory. We also provide an explicit 
expression for its supergradient, which we exploit later 
to design a gradient ascent algorithm.

\begin{proposition} 
\label{prop: gradient}
The dual function $q$ defined in~\eqref{eq: dual function} is concave. 
Moreover, under Assumptions~\ref{regularity_assumption},~\ref{HypSDOT}, and~\ref{HypMCF}, 
a supergradient $g(\psi)$ at $\psi \in \mathbb R^{|\mathcal N|}$ has components, 
for each $i\in\mathcal{S}$,
\begin{equation}    \label{eq: supergradient component for S}
g(\psi)_i = s_i - 
\mu(\mathrm{Lag}_i(\psi))
- \sum_{(i,j)\in\mathcal{A}}p_{ij}(\psi) + \sum_{(j,i)\in\mathcal{A}}p_{ji}(\psi),
\end{equation}
and, for each $i\in\mathcal{N}\setminus\mathcal{S}$,
\begin{equation}    \label{eq: supergradient component not S}
g(\psi)_i = s_i - \sum_{(i,j)\in\mathcal{A}}p_{ij}(\psi) + \sum_{(j,i)\in\mathcal{A}}p_{ji}(\psi).
\end{equation}
\end{proposition}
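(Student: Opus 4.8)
The plan is to exploit the structure of $q$ as a sum of three pieces, namely the integral term $\psi\mapsto\int_X\min_{i\in\mathcal S}\{c(x,i)-\psi_i\}\,\mathrm d\mu(x)$, the linear term $\psi\mapsto\sum_{i\in\mathcal N}\psi_i s_i$, and the arc terms $\psi\mapsto\min_{p\in[a_{ij},b_{ij}]}\{c_{ij}(p)-(\psi_i-\psi_j)p\}$, and to verify both concavity and the supergradient formula summand by summand, since both properties are stable under finite sums. Concavity follows because each summand is, for fixed auxiliary variable, affine in $\psi$: for fixed $x$ and $i$ the map $\psi\mapsto c(x,i)-\psi_i$ is affine, so its pointwise minimum over the finite set $\mathcal S$ is concave, and integration against the non-negative finite measure $\mu$ preserves concavity; likewise for each arc, $\psi\mapsto c_{ij}(p)-(\psi_i-\psi_j)p$ is affine for fixed $p$, so the infimum over $p$ is concave; the linear term is trivially concave. (Under Assumptions~\ref{regularity_assumption} and~\ref{regularity_assumption}-\eqref{assump: cij convex} together with compactness of $X$, the inner optima are finite, so $q$ is real-valued.)

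For the supergradient I would use the elementary principle that if $f(\psi)=\inf_\alpha\big(\langle a_\alpha,\psi\rangle+b_\alpha\big)$ and the infimum at a given point $\psi$ is attained by some index $\alpha^\star$, then $f(\psi')\le f(\psi)+\langle a_{\alpha^\star},\psi'-\psi\rangle$ for all $\psi'$, i.e., $a_{\alpha^\star}$ is a supergradient of $f$ at $\psi$. Applying this arc by arc with the minimizer $p_{ij}(\psi)$ from \eqref{eq: arc price}, which exists and is unique by Assumption~\ref{HypMCF}, the active affine minorant of the $(i,j)$ term at $\psi$ has $\psi$-gradient $-p_{ij}(\psi)(e_i-e_j)$, where $e_k\in\mathbb R^{|\mathcal N|}$ is the $k$-th standard basis vector; summing over all arcs contributes $-\sum_{(i,j)\in\mathcal A}p_{ij}(\psi)+\sum_{(j,i)\in\mathcal A}p_{ji}(\psi)$ to component $i$. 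The linear term contributes $s_i$.

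For the integral term, fix $\psi$ and let $i^\star(x)\in\mathcal S$ attain $\min_{i\in\mathcal S}\{c(x,i)-\psi_i\}$, resolving ties arbitrarily so that the sets $\{x:i^\star(x)=i\}$, $i\in\mathcal S$, partition $X$; lower semicontinuity of $c$ (Assumption~\ref{regularity_assumption}-\eqref{assump: assignment cost}) makes each $\mathrm{Lag}_i(\psi)$ and each such set $\mu$-measurable. Then for every $\psi'$,
\begin{equation*}
\min_{i\in\mathcal S}\{c(x,i)-\psi'_i\}\le c(x,i^\star(x))-\psi'_{i^\star(x)}=\min_{i\in\mathcal S}\{c(x,i)-\psi_i\}-\langle e_{i^\star(x)},\psi'-\psi\rangle,
\end{equation*}
and integrating over $X$ yields a supergradient $-\int_X e_{i^\star(x)}\,\mathrm d\mu(x)$, whose $i$-th component is $-\mu(\{x:i^\star(x)=i\})$. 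Assumption~\ref{HypSDOT} forces $\mu(\mathrm{Lag}_i(\psi)\cap\mathrm{Lag}_j(\psi))=0$ for distinct $i,j\in\mathcal S$, so this component equals $-\mu(\mathrm{Lag}_i(\psi))$ regardless of the tie-breaking rule and is zero for $i\in\mathcal N\setminus\mathcal S$. Adding the three contributions gives exactly \eqref{eq: supergradient component for S} for $i\in\mathcal S$ and \eqref{eq: supergradient component not S} for $i\in\mathcal N\setminus\mathcal S$.

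The affine-minorant inequalities are routine; the step requiring the most care is the integral term, specifically establishing $\mu$-measurability of the Laguerre cells and of $x\mapsto i^\star(x)$ from lower semicontinuity of $c$, and then invoking Assumption~\ref{HypSDOT} to identify $\mu(\{x:i^\star(x)=i\})$ with $\mu(\mathrm{Lag}_i(\psi))$ so that the resulting supergradient is well defined and independent of how ties are broken.
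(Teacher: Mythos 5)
Your proof is correct and follows essentially the same route as the paper: decompose $q$ into the SDOT part, the MCF arc terms, and the linear term, then obtain concavity and the supergradient componentwise. The only difference is that you derive the component facts from first principles via the active-affine-minorant argument (and treat the tie-breaking and measurability of the Laguerre cells explicitly, correctly invoking Assumption~\ref{HypSDOT} to make $\mu(\mathrm{Lag}_i(\psi))$ well defined), whereas the paper delegates these steps to cited results on the SDOT and MCF dual functions.
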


\begin{proof} We decompose the dual function $q$ in \eqref{eq: dual function} as
$q(\psi)=q_\text{SDOT}(\psi)+q_\text{MCF}(\psi)$, with
\begin{align*}
q_\text{SDOT}(\psi) &= \int_X \min_{i\in\mathcal{S}}\{c(x,i)-\psi_i\}\,\mathrm{d}\mu(x),
\end{align*}
and, 
\[
q_\text{MCF}(\psi)=\sum_{i\in\mathcal{N}} \psi_i s_i+\sum_{(i,j)\in\mathcal{A}} \min_{p\in[a_{ij},b_{ij}]} 
\{ c_{ij}(p)-(\psi_i-\psi_j)p \},
\]
by analogy with the dual functions for the SDOT problem~\cite{SDOT_Distributed} 
(without the linear term $\sum_{i\in \mathcal{S}} \psi_i s_i$) and the MCF problem~\cite{bertsekas1998network}.
Then, $q_\text{MCF}$ is concave as a sum of linear terms and the minimum of affine functions. 
Under Assumption~\ref{HypMCF}, its supergradient at $\psi$ is given by the 
expression~\eqref{eq: supergradient component not S} for each $i\in\mathcal{N}$,
see \cite[Chapter 5.5.]{BertsekasDistributed}.
Moreover, under Assumption~\ref{HypSDOT}, we can write
\[
q_\text{SDOT}(\psi) = \sum_{i\in\mathcal{S}}\int_{\mathrm{Lag}_i(\psi)} (c(x,i)-\psi_i)\, \mathrm{d}\mu(x).
\]
As shown in \cite[Theorem 4]{SDOT_Distributed}, this function
is concave and its supergradient at $\psi$ has as component $- \mu(\mathrm{Lag}_i(\psi))$
for each $i\in\mathcal{S}$.
By addition, $q$ is concave and its supergradient is given by~\eqref{eq: supergradient component for S} for each
$i\in\mathcal{S}$ and ~\eqref{eq: supergradient component not S} for each
$i\in\mathcal{N}\setminus S$.
\end{proof}

\subsection{Supergradient Algorithm and Distributed Implementation}

Starting from an arbitrary initial value $\psi^0 \in \mathbb R^{|\mathcal N|}$, 
an iterative supergradient ascent algorithm to maximize
\eqref{eq: dual problem} takes the form
\begin{equation}    \label{eq: supergradient algo}
\psi^{k+1} = \psi^k + \gamma_k \, g(\psi^k),
\end{equation}
with $g(\psi)$ given in Proposition \ref{prop: gradient} and the positive scalar stepsizes $\{\gamma_k\}_{k \geq 0}$ 
satisfying the standard conditions
$$
\sum_{k=0}^\infty \gamma_k = +\infty \quad \text{and} \quad \sum_{k=0}^\infty \gamma_k^2 < +\infty.
$$
Under these conditions, and noting that the gradients in Proposition \ref{prop: gradient} are
uniformly bounded,
the sequence $q(\psi^k)$ converges to an optimal value $q^*$ of \eqref{eq: dual problem} as $k \to +\infty$. Moreover, under Assumption \ref{assump: dual maximizer exists}, the iterates $\psi_k$
also converge to an optimal solution $\psi^*$ \cite[Proposition 8.2.6]{Bertsekas:book03:convex},
which can then be used to compute the assignments and flows using Proposition~\ref{thm:primal_reconstruction}.
We can now describe a distributed
method to solve problem \eqref{MP}, outlined in Algorithm~\ref{alg:distributed}.

\begin{algorithm}[htpb!]
\caption{Agent $i$'s flow and assignment computations}  \label{alg:distributed}
\begin{algorithmic}[1]
\STATE \textbf{Input for agent/node $i \in \mathcal N$:} 
$c(x,i)$, $s_i$, flow bounds $a_{ij},b_{ij}$ for $j$ s.t. $(i,j) \in \mathcal A$,
threshold $\varepsilon > 0$
\STATE \textbf{Output:} Optimal outgoing flows $\{p_{ij}^*\}_{j:(i,j) \in \mathcal A}$; 
and assignment $(T^*)^{-1}(i)$ if $i \in \mathcal S$
\STATE \textbf{Initialization:} Initialize dual variable $\psi_i$. $k \gets 0$ 
\REPEAT
    \STATE \label{algo: local flows computation - start}
    Collect $\psi_j$ from neighbours $j$ s.t. $(i,j) \in \mathcal A$
    \STATE \label{algo: local flows computation}
    Compute local flows, for all $j$ such that $(i,j)\in\mathcal{A}$:
    $$
    p_{ij} \gets \arg\min_{p\in[a_{ij},b_{ij}]}\{ c_{ij}(p)-(\psi_i-\psi_j)p\} 
    $$
    \STATE \label{algo: local flows computation - end}
    Collect values $p_{ji}$ from $j$ s.t. $(j,i)\in\mathcal{A}$    
    \IF{$ i\in\mathcal{S} $}    
        \STATE Cell mass computation : $m_i \gets \mu(\mathrm{Lag}_i(\psi))$ \label{algo: cell mass computation 1}
        \STATE \label{algo: gradient comput 1} $g_i \gets s_i - m_i - \sum_{(i,j)\in\mathcal{A}} p_{ij} + \sum_{(j,i)\in\mathcal{A}} p_{ji}$
    \ELSE 
        \STATE \label{algo: gradient comput 2} $ g_i \gets s_i - \sum_{(i,j)\in\mathcal{A}} p_{ij} + \sum_{(j,i)\in\mathcal{A}} p_{ji}$
    \ENDIF
    \STATE Dual update: $\psi_{i,old} \gets \psi_i$, $\psi_i \gets \psi_i+\gamma_k g_i$, $k \gets k+1$
\UNTIL{$|\psi_i - \psi_{i,old}| < \varepsilon$} \label{algo: termination}
\STATE \textbf{Primal reconstruction:}
$p_{ij}^*\gets p_{ij}$; and if $i \in \mathcal S$,
$(T^*)^{-1}(i) = \mathrm{Lag}_i(\psi)$
\end{algorithmic}
\end{algorithm}

Assume that a computing agent at each node $i \in \mathcal{N}$ within the network
updates the dual variable $\psi_i$ according to \eqref{eq: supergradient algo}, for which
it needs to compute the supergradient component $g_i(\psi)$ given in Proposition \ref{prop: gradient}.
At each iteration 
the agents compute the current flows $\{p_{ij}\}_{(i,j) \in \mathcal A}$
on Lines \ref{algo: local flows computation - start}-\ref{algo: local flows computation - end}
by exchanging information only with their outgoing and incoming neighbours. 
For endpoint nodes $i \in \mathcal{S}$, an additional step 
on Line \ref{algo: cell mass computation 1} is required to determine 
the measure $m_i = \mu(\mathrm{Lag}_i(\psi))$ of its current Laguerre cell 
in order to compute the supergradient component \eqref{eq: supergradient component for S}.
This step depends a priori on the variables $\psi_j$ of possibly all the other endpoints $j \in \mathcal S$.
Most practical instances feature a finite set of customers and 
a discrete measure $\mu$ encoding their individual demands. 
At iteration $k$, each endpoint $i\in\mathcal S$ needs to compare the adjusted 
costs $c(x,i)-\psi_i^{k}$ with those of the other endpoints.  
A customer at $x$ is matched to the endpoint that offers the lowest value. 
Then, we can compute $m_i=\sum_{x\in\mathrm{Lag}_i(\psi)}\mu(x)$ 
the total demand of the customers lying in the Laguerre cell of~$i$.
For some cost functions $c(x,i)$, e.g., the squared distance between $x$ 
and the position of node $i$, the geometric properties of the Laguerre cells can 
be exploited so that an endpoint only needs to compare the adjusted costs 
with other endpoints that share a cell boundary with it, 
see, e.g., \cite{Kwok:IJRNC10:powerDiags}.

\begin{remark}
The discretization of the space $X$ to compute the 
masses $m_i$ deterministically can be replaced by a stochastic integration method, generating samples according
to $\mu$ for which the endpoints then compare their adjusted costs, as discussed in~\cite{SDOT_Distributed}
for SDOT. The iterates~\eqref{eq: supergradient algo} then become a stochastic gradient algorithm, 
again with convergence guarantees.
\end{remark}

Finally, the agents need to detect convergence in a distributed manner. 
A possible test used on Line \ref{algo: termination} 
is that each node stops updating its dual variable $\psi_i$ when its variation 
stays below a certain threshold $\varepsilon$.
Each agent may also wait until the variables of its neighbours have 
converged as well before recomputing the final flows (as on Line \ref{algo: local flows computation})
and Laguerre cells.
Regarding the assignment computation, each endpoint can determine the points 
$x \in X$ belonging to its Laguerre cell. Alternatively in some applications, 
the final weights $\psi^k$ can be broadcast to the customers, who can then 
determine their assigned endpoint automatically.

\section{Numerical Simulations}
\label{section:simulations}

In this section, we illustrate our method through two numerical examples. 
First, we test Algorithm~\ref{alg:distributed}
on a synthetic example. 
Second, we briefly explore the applicability of the method to a more complex scenario relevant to electric power distribution networks.

\subsection{Synthetic Example} \label{section:synthetic}

We first consider a  
simple scenario shown on Figure~\ref{fig:network_gray}, defined on 
a square domain $X \subset \mathbb R^2$ of side length $L=100$. 
A continuous consumer demand distribution is discretized on a $200 \times 200 $~grid, for a total of 40,000 points. 
Each consumer is assigned a mass from a truncated Gaussian density with mean 
$(50,75)$ and standard deviation $25$. 
The backbone network consists of two nodes $\{ S_1, S_2\}$ with a supply $s_i=0.5$, called source nodes, 
and four nodes $\{ I_1, I_2, I_3, I_4 \}$ with zero supply, called interconnection nodes. 
Node positions and arcs are also shown on Figure~\ref{fig:network_gray}. 

The assignment cost $c(x,i)$ between consumer demand at $x$ and node $i$ is 
given by the Euclidean distance between $x$ and the position of node $i$. 
The arc costs are assumed to be quadratic functions of the flows, i.e., 
$c_{ij}(p) \coloneqq d_{ij} \, p^2$, where $d_{ij}$ is the Euclidean distance 
between nodes $i$ and $j$. 
We assume $a_{ij} = -1$ and $b_{ij} = 1$ for all $(i,j) \in \mathcal A$,
and obtain the closed-form local flow update 
at Line \ref{algo: local flows computation} of Algorithm \ref{alg:distributed} given by
$p_{ij} = \operatorname{proj}_{[-1,1]}\left(\frac{\psi_i - \psi_j}{2\,d_{ij}}\right)$.
The algorithm is run 
for $300$ iterations with diminishing step-size $\gamma_k=1/(1+0.01k)$.
Figure~\ref{fig:convergence} shows the convergence of the components of the supergradient $g$ 
toward zero and of the dual variables $\psi$.  
Figure~\ref{fig:partition} illustrates the resulting routing and partitioning solutions. 
Each cell is coloured according to its assigned endpoint node, 
and the network is represented with annotated flows on its arcs. 
Source nodes are labelled with their fixed supply $0.5$, and endpoints display their assigned demand


\begin{figure}[htbp]
    \centering
    \begin{subfigure}[b]{0.48\columnwidth}
        \centering
        \includegraphics[width=\linewidth]{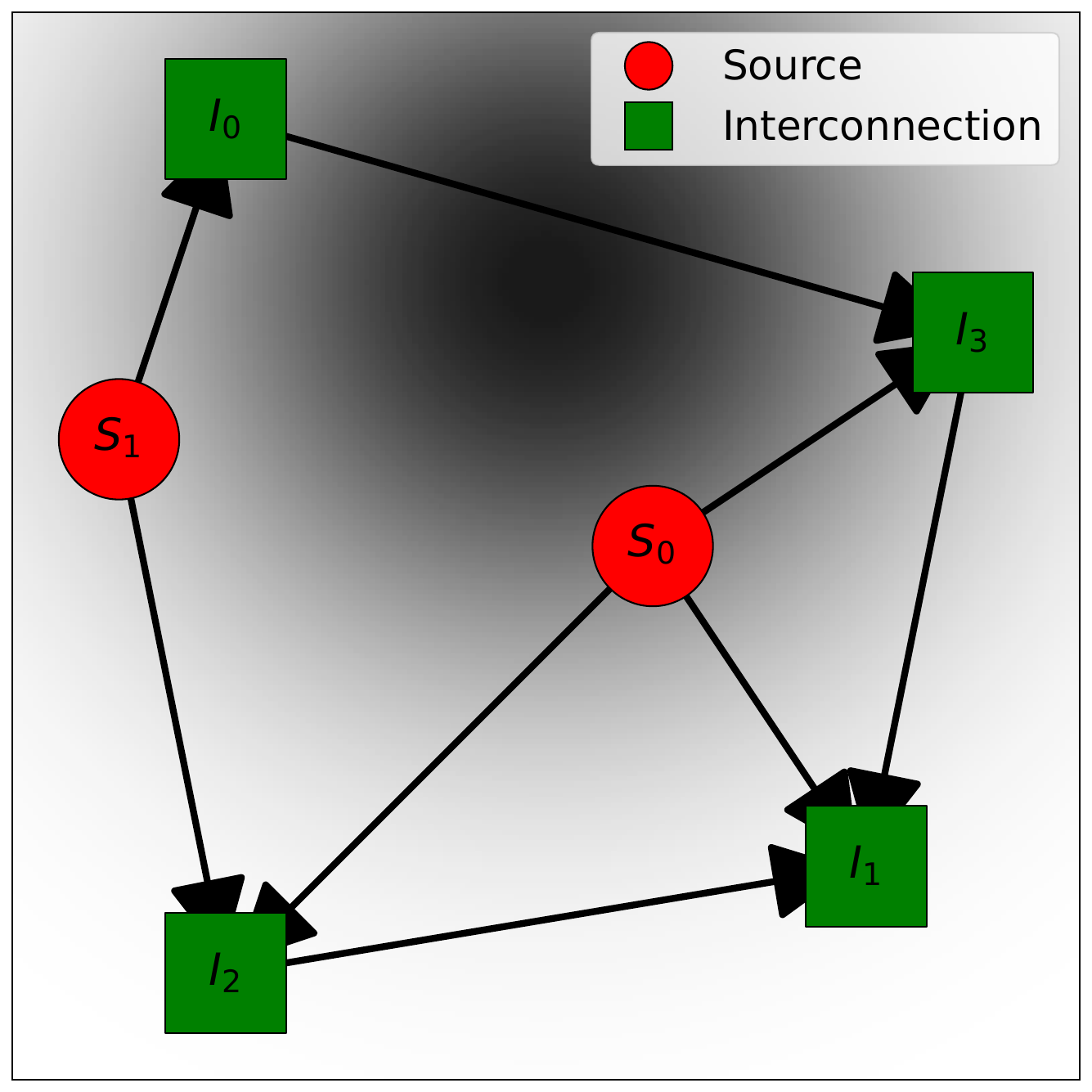}
        \caption{}
        \label{fig:network_gray}
    \end{subfigure}
    \hfill
    \begin{subfigure}[b]{0.48\columnwidth}
        \centering
        \includegraphics[width=\linewidth]{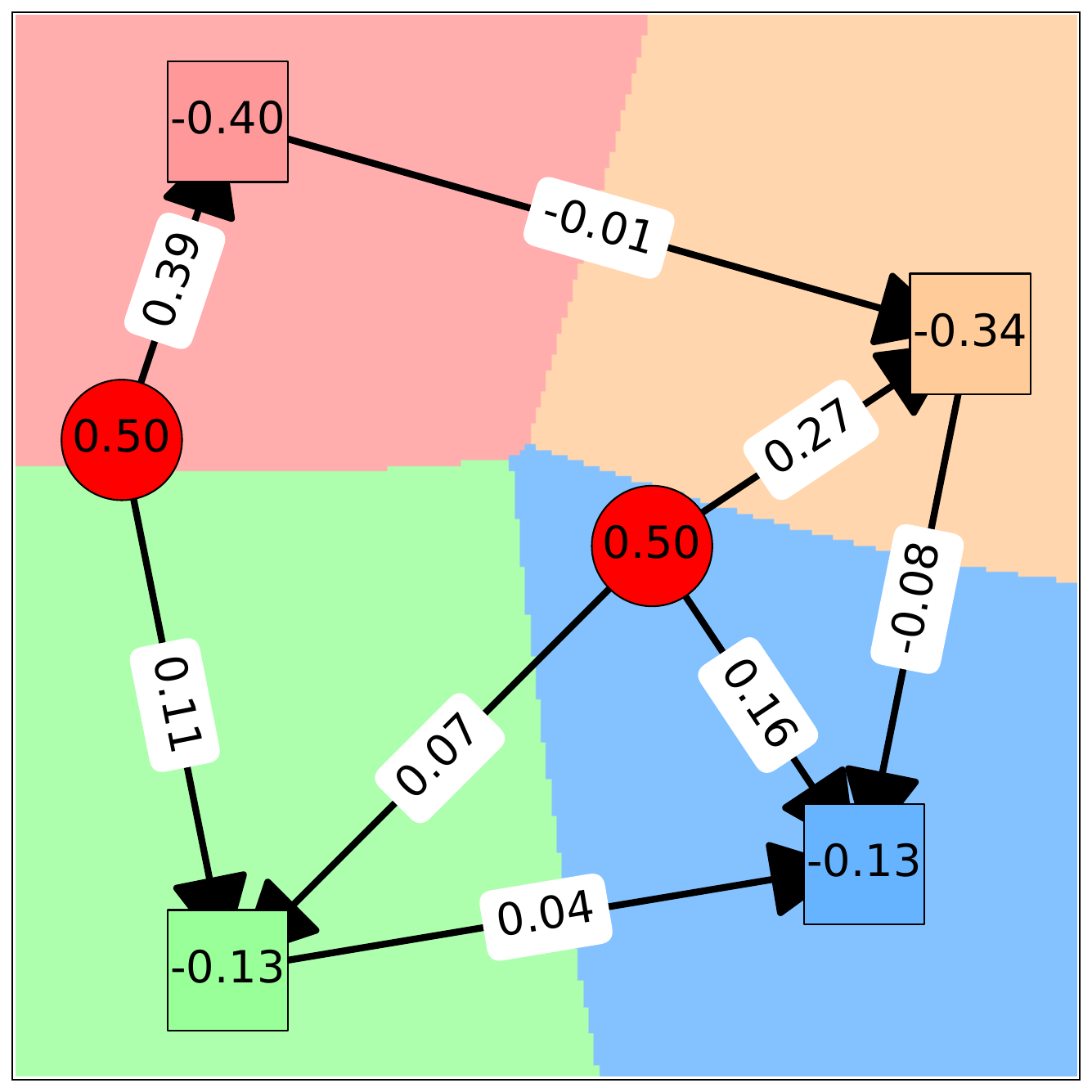}
        \caption{}
        \label{fig:partition}
    \end{subfigure}
    \caption{Synthetic example: (a) simple network and demand density in grayscale, (b) partitioning and routing solution.}
    \label{fig:network_partition}
\end{figure}


\begin{figure}[htbp]
    \centering
    \begin{subfigure}[b]{0.48\columnwidth}
        \centering
        \includegraphics[width=\linewidth]{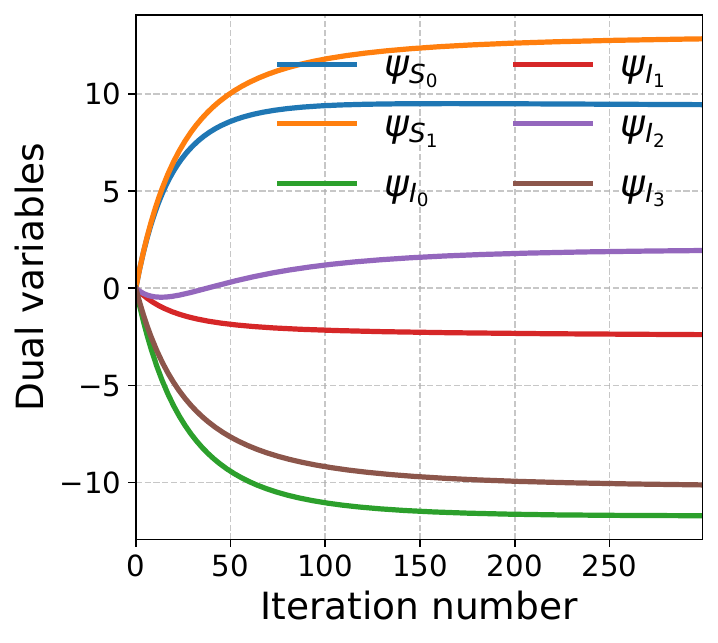}
        \caption{}
        \label{fig:convergence_psi}
    \end{subfigure}
    \hfill
    \begin{subfigure}[b]{0.48\columnwidth}
        \centering
        \includegraphics[width=\linewidth]{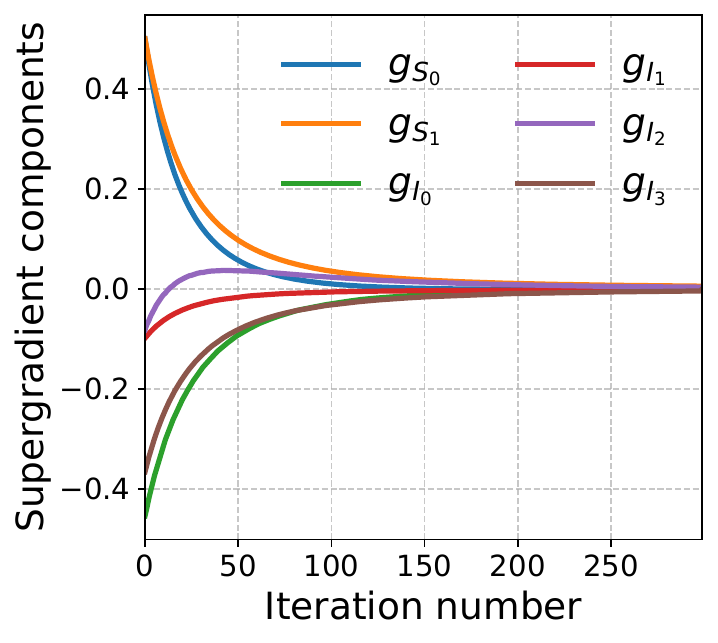}
        \caption{}
        \label{fig:convergence_grad}
    \end{subfigure}
    \caption{Convergence of (a) dual variables $\psi$ and (b) supergradient $g$.}
    \label{fig:convergence}
\end{figure}

\subsection{Electric Power Network Example}
\label{section: electrical network ex}

We now discuss an application to an electrical power network consisting of a transmission 
system linking generators to substations and a distribution network connecting substations 
to consumers. Consumer demand, initially discrete, is approximated by a continuous distribution. 
Substations 
are endpoint nodes $\mathcal{S}$ in our formulation.

An important problem in electrical networks is the radial reconfiguration of the distribution network: 
activating or deactivating switches on lines to ensure that each consumer is served via exactly one 
distribution path to a unique substation, forming tree-like (radial) subnetworks rooted at substations, see Figure~\ref{fig:partition_elec}. 
Solving large-scale reconfiguration problems exactly is computationally challenging~\cite{ReconfigurationSurvey}, so that the coupling with 
the transmission network optimization problem is usually neglected. 
Here we use Algorithm \ref{alg:distributed} to develop a heuristic producing
an initial assignment of customers to substations while minimizing transmission costs. 
This enables subsequent parallelized radial reconfiguration per substation, 
greatly reducing the computational burden.

We assume that the arc costs on the transmission network are $c_{ij}(p) = r_{ij} \, p^2$, 
representing power losses, where $r_{ij}$ is the line resistance and $p$ is the power flow. 
We take $r_{ij}$ as the distance $d_{ij}$ in this example. 
Next, to approximately capture the power losses in the distribution network
through the simple assignment cost $\int c(x,T(x))\,\mathrm d\mu(x)$, we choose 
for $c(x,i)$ the geodesic (shortest‑path) resistance through this network
between a customer $x$ and the substation $i$.
The quality of this heuristic choice will be explored in future work.
This choice also ensures an essential connectivity property for the resulting Laguerre cells: 
all consumers assigned to a substation can be physically connected to it via paths lying entirely 
within their assigned cell. Briefly, if a consumer $x$ belongs to the Laguerre cell $\mathrm{Lag}_i(\psi)$ 
associated with substation $i$ and vector $\psi \in \mathbb{R}^{|\mathcal{S}|}$, all intermediate 
nodes $y$ along the shortest path from $x$ to $i$ also belongs to $\mathrm{Lag}_i(\psi)$. This follows directly from the definitions of Laguerre cells and shortest paths, because for all $j\in \mathcal{S}$:
\begin{align*}
c(x,y)+c(y,i)-\psi_i &= c(x,i)-\psi_i \\
& \le c(x,j)-\psi_j \\
& \le c(x,y)+c(y,j)-\psi_j,
\end{align*}
which implies $c(y,i)-\psi_i \le c(y,j)-\psi_j$, hence $y \in \mathrm{Lag}_i(\psi)$.

Figure~\ref{fig:partition_elec} illustrates the partitioning and routing solutions obtained for this power network example. The transmission network topology mirrors the synthetic example described in Section~\ref{section:synthetic}. The distribution network was generated by randomly placing 1000 consumer nodes within the domain, each assigned a uniformly random demand value. To ensure a realistic network structure, each consumer node was connected to its two or three geographically closest neighbours. Figure~\ref{fig:partition_elec} displays the resulting Laguerre cells, with consumer nodes coloured according to their assigned substation, alongside optimized flows within the transmission network. 
The algorithm parameters used match those of the synthetic example.

\begin{figure}[htbp]
    \centering
    \includegraphics[width=1\columnwidth]{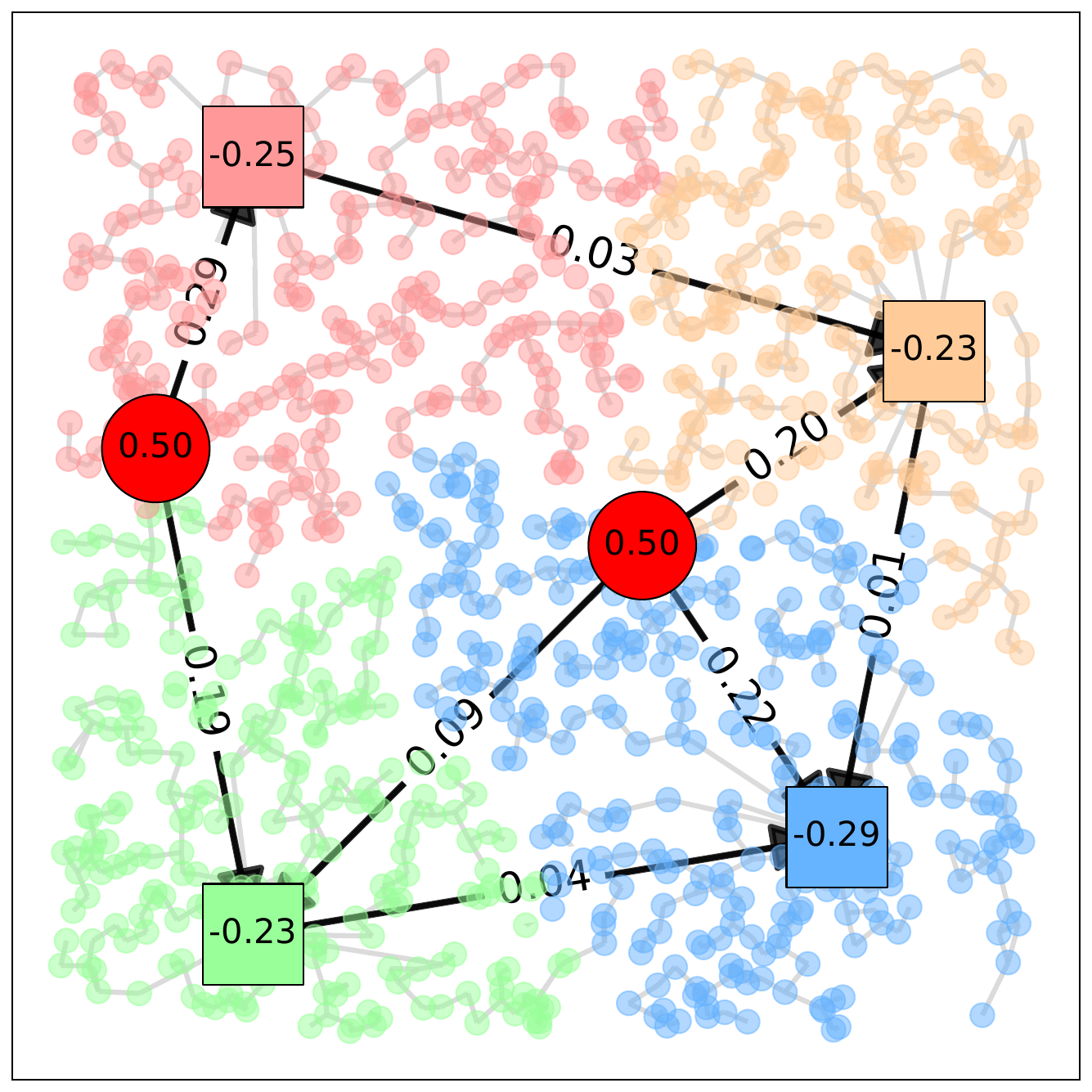}
    \caption{Optimal spatial partitioning and transmission network routing for the electrical network example with 1000 consumers. Each endpoint is the root of a tree connecting its assigned
    customers.}
    \label{fig:partition_elec}
\end{figure}

\section{Conclusion}\label{section:conclusion}
We have studied a combined space partitioning and network flow optimization problem motivated by the operation of large-scale networks serving spatially distributed consumers. By adopting an optimal transport perspective, we derived a dual formulation and designed a distributed algorithm relying only on local information exchanges. Numerical experiments confirmed the effectiveness of the method and illustrated its relevance to power distribution network reconfiguration. Future work will investigate extensions of the framework, in particular incorporating flexible generation through additional decision variables at network nodes.

\appendix
\subsection{Proof Sketch of Theorem \ref{thm: duality result}}
\label{appdx: duality proof}

Theorem \ref{thm: duality result} can be proved by following a general methodology 
for OT problems, as in \cite[Theorem 1.3]{Villani:2003book:topicsOT}.
Recall that for any function $f: E\to \mathbb{R}\cup\{+\infty\}$ on a normed vector space $E$ with dual $E^*$, 
its conjugate $f^*:E^*\to \mathbb{R}\cup\{+\infty\}$ is defined by $f^*(y) = \sup_{x\in E} \left\{ \langle x, y \rangle - f(x) \right\}.$

The methodology relies on the following version of the Fenchel–Rockafellar duality theorem
\cite[Theorem 1.9]{Villani:2003book:topicsOT}. 

\begin{theorem}[Fenchel–Rockafellar] 
\label{thm: Fenchel-Rockafellar}
Let $E$ be a normed vector space and $E^*$ its dual. If $\Theta,\Xi : E\to \mathbb{R}\cup\{+\infty\}$ 
are convex functions and if there exists $z_0\in E$ such that $\Theta(z_0)$ and $\Xi(z_0)$ are finite 
and $\Theta$ is continuous at $z_0$, then
\[
\inf_{z\in E}\Bigl\{\Theta(z)+\Xi(z)\Bigr\} = -\max_{z^*\in E^*}\Bigl\{-\Theta^*(-z^*)-\Xi^*(z^*)\Bigr\}.
\]
\end{theorem}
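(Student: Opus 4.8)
The plan is to treat weak duality as immediate and to concentrate the work on strong duality together with attainment of the dual supremum, both obtained from a single geometric Hahn–Banach separation. Write $p \coloneqq \inf_{z\in E}\{\Theta(z)+\Xi(z)\}$ and $d \coloneqq \sup_{z^*\in E^*}\{-\Theta^*(-z^*)-\Xi^*(z^*)\}$. The Young–Fenchel inequality gives, for every $z$ and $z^*$, that $\Theta(z)\ge \langle z,-z^*\rangle-\Theta^*(-z^*)$ and $\Xi(z)\ge \langle z,z^*\rangle-\Xi^*(z^*)$; adding these, the pairing terms cancel and yield $\Theta(z)+\Xi(z)\ge -\Theta^*(-z^*)-\Xi^*(z^*)$, hence $p\ge d$. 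If $p=-\infty$ this forces $d=-\infty$ and the identity holds trivially, so I may assume $p$ is finite and must show $p\le d$ with the value $d$ attained.

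First I would introduce, in the product space $E\times\mathbb{R}$, the two convex sets
\[
C_1=\operatorname{int}\bigl(\operatorname{epi}\Theta\bigr),\qquad C_2=\{(z,\lambda): \lambda\le p-\Xi(z)\}.
\]
The continuity of $\Theta$ at $z_0$ (where it is finite) guarantees that $\operatorname{epi}\Theta$ has nonempty interior, since a whole neighbourhood of $(z_0,\Theta(z_0)+1)$ lies in the epigraph; thus $C_1\neq\emptyset$. The set $C_2$ is convex and nonempty, as it contains $(z_0,p-\Xi(z_0))$ using that $\Xi(z_0)$ is finite. Crucially, $C_1\cap C_2=\emptyset$: a common point would satisfy $\Theta(z)<\lambda\le p-\Xi(z)$, i.e. $\Theta(z)+\Xi(z)<p$, contradicting the definition of $p$.

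Next I would apply the geometric Hahn–Banach theorem to separate the open convex set $C_1$ from the convex set $C_2$, obtaining a nonzero continuous functional $(\phi,r)\in E^*\times\mathbb{R}$ and a scalar $\alpha$ with $\langle z,\phi\rangle+r\lambda\ge\alpha$ on $\operatorname{epi}\Theta$ and $\langle z,\phi\rangle+r\lambda\le\alpha$ on $C_2$. Letting $\lambda\to+\infty$ in the first inequality forces $r\ge 0$. The decisive point is to rule out the vertical case $r=0$: if $r=0$, then $\langle\cdot,\phi\rangle$ is bounded below by $\alpha$ on a neighbourhood of $z_0$ (since $z_0$ is interior to $\operatorname{dom}\Theta$), which forces $\phi=0$ and contradicts $(\phi,r)\neq 0$. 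Hence $r>0$, and after normalizing $r=1$ the evaluations at $\lambda=\Theta(z)$ and $\lambda=p-\Xi(z)$ give $\Theta(z)+\langle z,\phi\rangle\ge\alpha$ and $\Xi(z)-\langle z,\phi\rangle\ge p-\alpha$ for all $z$. Taking suprema yields $\Theta^*(-\phi)\le-\alpha$ and $\Xi^*(\phi)\le\alpha-p$, so that with $z^*\coloneqq\phi$,
\[
-\Theta^*(-z^*)-\Xi^*(z^*)\ge \alpha+(p-\alpha)=p.
\]
Combined with weak duality $d\le p$, this shows $d=p$ and that the supremum defining $d$ is attained at $z^*=\phi$, which is exactly the claimed equality with a maximum on the right.

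The step I expect to be the main obstacle is the exclusion of the degenerate vertical separating hyperplane $r=0$; this is precisely where the hypothesis that $\Theta$ is \emph{continuous} (not merely finite) at $z_0$ is indispensable, both to guarantee that $\operatorname{epi}\Theta$ has the nonempty interior required by the infinite-dimensional separation theorem and to force the separating functional to be non-vertical. A secondary technical point is to justify that continuity of the convex function $\Theta$ at the single point $z_0$ propagates to an open neighbourhood, a standard fact for convex functions finite and bounded above near a point, which underlies both the nonemptiness of $C_1$ and the argument that $\phi=0$ when $r=0$.
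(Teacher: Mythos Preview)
Your argument is the standard and correct proof of the Fenchel--Rockafellar theorem via geometric Hahn--Banach separation of $\operatorname{int}(\operatorname{epi}\Theta)$ from the hypograph of $p-\Xi$. However, there is nothing to compare it against: the paper does not prove this statement at all. Theorem~\ref{thm: Fenchel-Rockafellar} is quoted verbatim from \cite[Theorem~1.9]{Villani:2003book:topicsOT} and used as a black box in the proof of Theorem~\ref{thm: duality result}; the appendix only shows how to instantiate $E$, $\Theta$, $\Xi$ so that the abstract duality specializes to the equality between \eqref{KP} and \eqref{eq: dual problem}.

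One small imprecision worth tightening in your write-up: the claim ``$\langle\cdot,\phi\rangle$ is bounded below by $\alpha$ on a neighbourhood of $z_0$, which forces $\phi=0$'' is not true as stated (a nonzero linear functional is bounded below on any bounded set). What actually forces $\phi=0$ is that you simultaneously have $\langle z_0,\phi\rangle\le\alpha$ from the $C_2$ side, so $z_0$ is a \emph{local minimizer} of the linear map $\langle\cdot,\phi\rangle$, and a linear functional with a local extremum is identically zero. Alternatively, use the strict form of the separation on the open set $C_1$ to obtain $\langle z_0,\phi\rangle>\alpha$ and $\langle z_0,\phi\rangle\le\alpha$ directly, which is an immediate contradiction without passing through $\phi=0$. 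Either fix is routine, and you already flagged this step as the crux.
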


To prove Theorem \ref{thm: duality result}, we define suitable convex functions~$\Theta$ and~$\Xi$ 
on a suitable space $E$ and apply Theorem \ref{thm: Fenchel-Rockafellar}.
In our case, 
we take $E = \mathcal{C}_b(X \times \mathcal{S}) \times \mathbb{R}^{|\mathcal{A}|}$,
where~$\mathcal{C}_b(X \times \mathcal{S})$ denotes the space of continuous bounded functions on $X \times \mathcal{S}$ 
equipped with the supremum norm.
By Riesz's theorem, its topological dual space can be identified with the space of finite signed Radon measures on $X\times \mathcal{S}$, denoted by $\mathcal{M}(X \times \mathcal{S})$. Thus, the dual space is $E^* = \mathcal{M}(X \times \mathcal{S}) \times \mathbb{R}^{|\mathcal{A}|}$,
with the duality pairing between $E$ and $E^*$ given by 
\[
\langle (u,v), (\pi,p) \rangle = \int_{X \times \mathcal{S}} u(x,i)\,\mathrm{d}\pi(x,i) + \sum_{(i,j)\in \mathcal{A}} v_{ij}\,p_{ij}.
\]
We then define $\Theta(u,v) = \Theta_\text{OT}(u) + \Theta_\text{MCF}(v)$, where
\[
\Theta_\text{OT}(u)=
\begin{cases}
0, & \text{if } u(x,i)\ge -c(x,i) \; \forall\,(x,i)\in X\times \mathcal{S},\\
+\infty, & \text{otherwise,}
\end{cases}
\]
as in \cite{Villani:2003book:topicsOT}, and $\Theta_\text{MCF}(v)=\sum_{(i,j)\in \mathcal{A}}\tilde{c}_{ij}^*(-v_{ij})$, with $\tilde{c}_{ij}(p)=c_{ij}(p)+\delta_{[a_{ij},b_{ij}]}(p).$

As for the function $\Xi: E \to \mathbb{R}\cup\{+\infty\}$, extending the idea in \cite[Theorem 1.3]{Villani:2003book:topicsOT}, 
if there exist functions $\varphi\in C_b(X)$ and $\psi\in\mathbb{R}^{|\mathcal{N}|}$ such that 
$u(x,i)=\varphi(x)+\psi_i$ for all $(x,i)\in X\times\mathcal{S}$
$v_{ij}=\psi_j-\psi_i$ for all $(i,j)\in\mathcal{A}$, then
\[
\Xi(u,v)=\int_X \varphi(x)\,\mathrm{d}\mu(x) + \sum_{i\in\mathcal{N}} \psi_i\,g_i,
\]
otherwise, we set $\Xi(u,v)=+\infty$. The function $\Xi$ is well-defined \cite[p. 27]{Villani:2003book:topicsOT}.
With some sign changes in the variable definitions, we have that
\[
\inf_{(u,v)\in E}\{\Theta(u,v)+\Xi(u,v)\} = -\sup_{\psi\in\mathbb{R}^{|N|}}q(\psi).
\]

Next, 
after some calculations, leveraging $\tilde{c}_{ij}^{**}=\tilde{c}_{ij}$ under 
Assumption \ref{regularity_assumption}-\eqref{assump: cij convex} (convexity), 
we can show that
\[
\Theta^*\bigl(-(\pi,p)\bigr)=\int_{X\times \mathcal{S}}c(x,i)\,\mathrm{d}\pi(x,i)+\sum_{(i,j)\in \mathcal{A}}c_{ij}(p_{ij}),
\]
provided that $\pi\in \mathcal{M}(X\times \mathcal{S})$, $\pi\ge0$ and $p_{ij}\in[a_{ij},b_{ij}]$, i.e., matching \eqref{eq:obj_relax} under \eqref{eq: bound constraint}, and $+\infty$ otherwise.
For $\Xi^*$, by reparameterizing any $(u,v)$ with finite $\Xi(u,v)$ in terms of $\varphi$ and $\psi$, one obtains that $\Xi^*(\pi,p)$ enforces the marginal condition $\pi(A\times\mathcal{S})=\mu(A)$ for all $A\subset \mathcal B(X)$ (see \eqref{eq: marginal constraint}) and the flow balance constraints (see \eqref{eq: Kantorovich assignment constraint}); that is, $\Xi^*(\pi,p)=0$ if these conditions hold and $+\infty$ otherwise.

It follows that $\sup_{(\pi,p)\in E^*}\{-\Theta^*(-(\pi,p))-\Xi^*(\pi,p)\}$ is exactly the negative 
of the primal problem \eqref{KP}. Thus, by applying  
Theorem \ref{thm: Fenchel-Rockafellar}
we obtain the desired duality result. Assumption~\ref{regularity_assumption}, with the key hypotheses 
that $c$ and $c_{ij}$ are lower semicontinuous and convex, $X$ is compact, and the feasibility 
condition $\mu(X)=\sum_{i\in N}g_i$, ensures that $\Theta$ and $\Xi$ satisfy the convexity, continuity
and qualification conditions required to apply the theorem.

\subsection{Proof Sketch of Proposition \ref{thm:primal_reconstruction}}
\label{appdx: recovering a solution proof}
     
Let $(\pi^*, p^*)$ be an optimal solution for (2) and $\psi^*$ be an optimal dual solution 
for \eqref{eq: dual problem}. 
Since $(\pi^*, p^*)$ is feasible, the flow constraints hold, so by multiplying each constraint 
in the primal problem by $\psi^*_i$ and  
using strong duality, we have
\begin{align*}
     q(\psi^*) =& \int_{X\times\mathcal{S}} c(x,i)\,\mathrm{d}\pi^*(x,i) + \sum_{(i,j)\in\mathcal{A}} c_{ij}(p^*_{ij}) \\ 
     &\hspace{-0.5cm}+\sum_{i\in\mathcal{S}} \psi^*_i \left[ -\sum_{(i,j)\in\mathcal{A}} p^*_{ij} + \sum_{(j,i)\in\mathcal{A}} p^*_{ji} + s_i - \pi^*(X,i) \right] \\
     &+\sum_{i\in\mathcal{N\setminus \mathcal{S}}} \psi^*_i \left[ -\sum_{(i,j)\in\mathcal{A}} p^*_{ij} + \sum_{(j,i)\in\mathcal{A}} p^*_{ji} + s_i \right]. 
\end{align*}   
Substituting $q(\psi^*)$ by its definition \eqref{eq: dual function} and bringing all the terms 
to the right-hand side, the $\psi_i^*s_i$ terms vanish and we obtain $\sum_{(i,j)\in\mathcal{A}} B_{ij} + \int_{X\times\mathcal{S}} A(x,i)\,\mathrm{d}\pi^*(x,i)=0$, with 
\begin{align*}
B_{ij} \coloneqq& \, c_{ij}(p^*_{ij})-(\psi^*_i-\psi^*_j)p^*_{ij}\\ 
& \quad-\min_{p\in[a_{ij},b_{ij}]}\{c_{ij}(p)-(\psi^*_i-\psi^*_j)p\},
\end{align*}
and, $A(x,i) = [c(x,i)-\psi^*_i] - \min_{k\in\mathcal{S}}\{c(x,k)-\psi^*_k\},$
using the marginal constraint \eqref{eq: marginal constraint}.
Now, because $A(x,i)\ge0$ and $B_{ij}\ge0$, each term must be zero.

For the flow terms, $B_{ij}(p^*_{ij})=0$ for every arc $(i,j)\in\mathcal{A}$ implies  
$p^*_{ij}=p_{ij}(\psi^*)$, i.e., the minimizer in \eqref{eq: arc price} for $\psi^*$.
For the integral term, we must have $\pi^*$-almost everywhere that $A(x,i)=0$, i.e.,  
$\pi^*$ is concentrated on the set 
\[
\left \{ (x,i) \in X \times \mathcal S: c(x,i) - \psi_i^* = \min_{k\in\mathcal{S}}\{c(x,k)-\psi^*_k\} \right \}.
\]
Hence, for $\mu$-almost all $x$, when the minimizing index $i$ is unique $\pi^*$ must
assign $i$ to $x$, which corresponds to the map $T^*$ in Proposition \ref{thm:primal_reconstruction}. 
When the minimizer is not unique, $\pi^*$ may randomize between them, however under 
Assumption \ref{HypSDOT}, choosing the 
deterministic assignment $T^*$ also for such $x$ has no impact on the cost.
Overall, this shows that $(\operatorname{id}_X,T^*)_{\#}\mu$ and $p^*$ is optimal
for \eqref{KP}, so $(T^*,p^*)$ is optimal for \eqref{MP} and the relaxation 
is tight.

\bibliographystyle{IEEEtran}
\bibliography{references}

\begin{thebibliography}{10}
\providecommand{\url}[1]{#1}
\csname url@samestyle\endcsname
\providecommand{\newblock}{\relax}
\providecommand{\bibinfo}[2]{#2}
\providecommand{\BIBentrySTDinterwordspacing}{\spaceskip=0pt\relax}
\providecommand{\BIBentryALTinterwordstretchfactor}{4}
\providecommand{\BIBentryALTinterwordspacing}{\spaceskip=\fontdimen2\font plus
\BIBentryALTinterwordstretchfactor\fontdimen3\font minus \fontdimen4\font\relax}
\providecommand{\BIBforeignlanguage}[2]{{%
\expandafter\ifx\csname l@#1\endcsname\relax
\typeout{** WARNING: IEEEtran.bst: No hyphenation pattern has been}%
\typeout{** loaded for the language `#1'. Using the pattern for}%
\typeout{** the default language instead.}%
\else
\language=\csname l@#1\endcsname
\fi
#2}}
\providecommand{\BIBdecl}{\relax}
\BIBdecl

\bibitem{ReviewTSODSO}
A.~G. Givisiez, K.~Petrou, and L.~F. Ochoa, ``A review on tso-dso coordination models and solution techniques,'' \emph{Electric Power Systems Research}, vol. 189, 2020.

\bibitem{jointUA_CellularNetwork}
D.~Fooladivanda and C.~Rosenberg, ``Joint resource allocation and user association for heterogeneous wireless cellular networks,'' \emph{IEEE Transactions on Wireless Communications}, vol.~12, no.~1, pp. 248--257, 2012.

\bibitem{peyré2019computationaloptimaltransport}
G.~Peyré and M.~Cuturi, ``Computational optimal transport: With applications to data science,'' \emph{Foundations and Trends in Machine Learning}, vol.~11, no. 5-6, 2019.

\bibitem{Villani:2003book:topicsOT}
C.~Villani, \emph{Topics in Optimal Transportation}.\hskip 1em plus 0.5em minus 0.4em\relax AMS, 2003.

\bibitem{bertsekas1998network}
D.~Bertsekas, \emph{Network Optimization: Continuous and Discrete Methods}.\hskip 1em plus 0.5em minus 0.4em\relax Athena Scientific, 1998.

\bibitem{ReconfigurationSurvey}
H.~Lotfi, M.~E. Hajiabadi, and H.~Parsadust, ``Power distribution network reconfiguration techniques: A thorough review,'' \emph{Sustainability}, vol.~16, no.~23, 2024.

\bibitem{ClusteringPowerGrids}
R.~J. Sánchez-García, M.~Fennelly, S.~Norris, N.~Wright, G.~Niblo, J.~Brodzki, and J.~W. Bialek, ``Hierarchical spectral clustering of power grids,'' \emph{IEEE Transactions on Power Systems}, vol.~29, no.~5, pp. 2229--2237, 2014.

\bibitem{ReconfBranchExchangeCluster}
E.~C. Pereira, C.~H. N.~R. Barbosa, and J.~A. Vasconcelos, ``Distribution network reconfiguration using iterative branch exchange and clustering technique,'' \emph{Energies}, vol.~16, no.~5, 2023.

\bibitem{PartitioningRadialReconf}
Y.~Bansal, R.~Sodhi, S.~Chakrabarti, and A.~Sharma, ``A novel two-stage partitioning based reconfiguration method for active distribution networks,'' \emph{IEEE Trans. Power Delivery}, vol.~38, no.~6, pp. 4004--4016, 2023.

\bibitem{joint_routing}
A.~Mesodiakaki, E.~Zola, and A.~Kassler, ``Joint user association and backhaul routing for green 5g mesh millimeter wave backhaul networks,'' in \emph{Proc. ACM MSWiM}, 2017, pp. 179--186.

\bibitem{OT_5G}
G.~Paschos, N.~Liakopoulos, M.~Debbah, and T.~Wen, ``Computational optimal transport for {5G} massive {C-RAN} device association,'' in \emph{IEEE Global Communications Conference (GLOBECOM)}, 2018.

\bibitem{OT_UAVs}
M.~Mozaffari, W.~Saad, M.~Bennis, and M.~Debbah, ``Wireless communication using unmanned aerial vehicles ({UAVs}): Optimal transport theory for hover time optimization,'' \emph{IEEE Transactions on Wireless Communications}, vol.~16, no.~12, pp. 8052--8066, 2017.

\bibitem{branchedOT}
P.~Lippmann, E.~F. Sanmartín, and F.~A. Hamprecht, ``Theory and approximate solvers for branched optimal transport with multiple sources,'' 2022.

\bibitem{DEOT}
I.~Lau, S.~Ma, and C.~A. Uribe, ``Decentralized and equitable optimal transport,'' in \emph{American Control Conference (ACC)}, July 2024.

\bibitem{DistributedOnlineOT}
V.~Krishnan and S.~Martínez, ``Distributed online optimization for multi-agent optimal transport,'' \emph{Automatica}, vol. 171, p. 111880, 2025.

\bibitem{SDOT_Distributed}
J.~Le~Ny and G.~J. Pappas, ``Adaptive deployment of mobile robotic networks,'' \emph{IEEE Transactions on Automatic Control}, vol.~58, no.~3, pp. 654--666, 2013.

\bibitem{lévyDistributedSDOT}
B.~Lévy, ``Large-scale semi-discrete optimal transport with distributed voronoi diagrams,'' 2024.

\bibitem{BertsekasDistributed}
D.~P. Bertsekas and J.~N. Tsitsiklis, \emph{Parallel and Distributed Computation: Numerical Methods}.\hskip 1em plus 0.5em minus 0.4em\relax Athena Scientific, 1997.

\bibitem{DistributedNewtonMCF}
M.~Zargham, A.~Ribeiro, A.~Ozdaglar, and A.~Jadbabaie, ``Accelerated dual descent for network flow optimization,'' \emph{IEEE Transactions on Automatic Control}, vol.~59, no.~4, pp. 905--920, 2014.

\bibitem{Aurenhammer:SIAM87:powerdiagrams}
F.~Aurenhammer, ``Power diagrams: Properties, algorithms and applications,'' \emph{SIAM Journal on Computing}, vol.~16, no.~1, pp. 78--96, 1987.

\bibitem{lemma9}
N.~Toumi, R.~Malhamé, and J.~Le~Ny, ``Social optima of a linear quadratic collective choice model under congestion,'' 2024.

\bibitem{Bertsekas:book03:convex}
D.~Bertsekas, A.~Nedic, and A.~Ozdaglar, \emph{Convex analysis and optimization}.\hskip 1em plus 0.5em minus 0.4em\relax Athena Scientific, 2003.

\bibitem{Kwok:IJRNC10:powerDiags}
A.~Kwok and S.~Mart{\'\i}nez, ``Deployment algorithms for a power-constrained mobile sensor network,'' \emph{International Journal of Robust and Nonlinear Control}, vol.~20, no.~7, pp. 745--763, 2010.

\end{thebibliography}

\end{document}